\theoremstyle{plain}
\newtheorem{definition}{Definition}
\newtheorem{proposition}{Proposition}
\newtheorem{theorem}[proposition]{Theorem}
\newtheorem{lemma}[proposition]{Lemma}
\newtheorem*{proposition*}{Proposition}
\newtheorem*{theorem*}{Theorem}
\newtheorem*{corollary*}{Corollary}
\newtheorem*{lemma*}{Lemma}
\newtheorem*{remark*}{Remark}
\newtheorem*{example*}{Example}
\newcommand{\R}{\mathbb{R}}
\newcommand{\C}{\mathbb{C}}
\newcommand{\Coker}{\mbox{coker\ }}
\newcommand{\coker}{\mbox{coker\ }}
\begin{document}

\title{A simple proof of a theorem of Fukaya and Oh}

\author{Vito Iacovino}

\address{Department of Mathematics, University of California at Irvine, Irvine, CA 92697, USA.}

\email{viacovin@math.uci.edu}

\date{version: \today}


\begin{abstract}
We study the moduli space of pseudo pointed holomorphic disks with boundaries mapped in the zero section of the cotangent bundle of a manifold. We define perturbations of the equation for which it is possible to describe explicitly all the solutions of the problem in terms of Morse graphs on the manifold. 
In particular, this proves that the $A_\infty$ structure of the zero section of the cotangent bundle is equivalent to the Morse $A_\infty$ structure of the base manifold.  
\end{abstract}

\maketitle

\section{Introduction}

Let $M$ be a Riemannian manifold. 
In \cite{FO} Fukaya and Oh study the moduli space of pointed pseudo-holomorphic disks bounding (perturbations of) the zero section $L$ of $T^*M$.
More precisely a function on $M$ is assigned on any boundary component of the disk. The boundary component is mapped to the graph of the differential of the associated function. Fukaya and Ho consider the "adiabatic limit" of this problem, namely they rescale the functions by a small real number $\varepsilon>0$. Under generic conditions on the functions, for $\varepsilon$ that goes to zero the $J$-holomorphic disks degenerate to the Morse trees of the functions.   



The construction of \cite{FO} generalizes the construction of Floer in \cite{Fl}. Floer showed a chain level isomorphism between the Lagrangian Floer complex of $L$ and the Morse complex of $M$. 
Given a Morse function $f$ on $M$, the pseudo-holomorphic strips that are bounded by $L$ and the graph of $\varepsilon df$ are in correspondence with the gradient flow lines of $f$. More precisely, there exists a complex structure on $T^*M$ for which it is possible to describe explicitly all these strips in terms of the Morse gradient flows of $f$.   


One main complication of the problem of \cite{FO} with respect to the problem of \cite{Fl} is that it is not possible to find explicit solutions. Therefore, in order to construct from a Morse tree a pseudo-holomorphic disk it is necessary to find an approximative solution and use an iterative process. This process involves a delicate analysis of the estimates when $\varepsilon$ goes to zero.

The reason why it is interesting to consider these perturbations of the problem is that there are algebraic structures associated with the moduli space of solutions  (such as the Fukaya $A_\infty$ structure) that are independent of the perturbation. However it is possible to consider more general perturbation of the problem (see \cite{S}). In particular it is possible (and in some sense more elementary) to consider inhomogeneous perturbations. 


We define inhomogeneous perturbations of the pseudo-holomorphic equation for which it is possible to describe explicitly all the solutions.
More precisely, we associate a Floer datum to a set of functions on $M$ and we construct an inhomogeneous perturbation compatible with this datum. After rescaling the functions we are able to write all the solutions of the problem in terms of Morse trees on $M$. Furthermore, if the Morse trees are transversal (which holds for a generic choice of the functions) we prove the Floer transversality of the solutions.

We consider the space of associated Morse graphs as a stratified space. The stratum of codimension $k$ is given by the graphs with $k$ edges of length zero. For a generic choice of the functions any stratum will be transversal. For the principal stratum there is a one to one correspondence between solutions and graphs. To a graph in the codimension $k$ stratum corresponds a family of dimension $k$ of solutions. 

{\bf Acknowledgements. } We are grateful to P. Seidel for insightful discussions.

\section{Morse trees}

Let $T$ be a tree. Denote by $E(T)$ and $V(T)$ the sets of edges and vertices of $T$. Let $E^{ex}(T)$ and $E^{in}(T)$ be the sets of external and internal edges respectively. Let $H(T)$ be the set of half edges or edges with a starting point. In $H(T)$ there are two elements for any internal edge and one for any external edge.  

For any vertex $v \in V(T)$ let $H(v)$ be the set of half edges starting in $v$ and denote by $|v|$ the cardinality of $H(v)$. A ribbon structure on $T$ assigns to any vertex $v$ a cyclic order of $H(v)$. 
 
A Riemannian metric on $T$ assigns a metric on each edge of $T$ such that any external edge has length infinity. The moduli space of Riemannian metrics of $T$ modulo isometry is $(\R^+)^{E^{in}(T)}$. We denote by $L(e)$ the length of the edge $e$.

Let $M$ be a manifold endowed with a Riemannian metric $g$.
Suppose that to any pair of boundary components of the ribbon tree $T$ Morse function on $M$ is assigned on $M$. In particular any oriented edge $e$ is assigned a Morse function $f_e$.

A Morse gradient tree assigns to any oriented edge $e$ of $T$ a map $\gamma_e$ from an interval of length $L(e)$ to $M$
such that
\begin{equation} \label{morseeq}
\dot \gamma_e - \nabla f_e (\gamma_e)=0.
\end{equation}
The starting point of these curves has to agree for edges starting from the same vertex.



Fix for any external edge $e$ the corresponding external vertex $p_e$. Denote by $\mathcal{G}^T(g,f,p)$ the moduli space of Morse gradient trees. 

For an external edge $e$ let $M_e$ be the unstable manifold of $f_e$ and let $\pi_e : M^{H(\gamma)} \rightarrow M$ be the projection associated to the edge $e$. 

For any internal edge $e$ let $M_e$ be the submanifold of $M^2$ given by 
$$ M_e= \{  (x, \phi^e_t(x) ) | x \in M \}$$
the pairs of the initial and final points of the Morse trajectories of $f_e$. Let $\pi_e : M^{H(\gamma)} \rightarrow M^2$ the projection corresponding to the vertices attached to $e$. 


Define the submanifold $\mathcal{E}^T$ of $M^{H(T)}$ by
$$\mathcal{E}^T= \bigcap_{e \in E(T)} \pi_e^{-1} (M_e).$$

For any vertex $v$ let $\pi_v: M^{H(T)} \rightarrow M^{H(v)}$ be the projection associated to the edges attached to $v$. Let $\Delta_v \in M^{|v|}$ be the diagonal.
Define
$$\mathcal{V}^T = \bigcap_{v \in V(T)} \pi_v^{-1} \Delta_v .$$

We have 
$$ \mathcal{G}^T = \mathcal{V}^T \cap \mathcal{E}^T  . $$


One says that the set of functions $f$ are transversal if $\mathcal{V}^T$ and $\mathcal{E}^T$ intersect transversally for every tree $T$. It is a standard fact that this holds for a generic choice of functions.

\section{Inhomogeneous pseudo-holomorphic disks}

Let $\mathcal{T}_k$ be the moduli space of disks with $k$ marked points on the boundary. 

Fix on the $\mathcal{T}_k$ a consistent universal choice of strip-like ends (see section 9g of \cite{S}). An easy consequence of Lemma 9.2 of \cite{S} is the following

\begin{lemma} 
Fix $d \geq 3$. For each $k \leq d$ there exists a bounded region $\mathcal{B}_k$ of $\mathcal{T}_k$ such that any disk in $\mathcal{T}_d$ can be obtained in unique way gluing disks in $\cup_k \mathcal{B}_k$.    
\end{lemma}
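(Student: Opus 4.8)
The plan is to exploit the structure of the moduli spaces $\mathcal{T}_k$ of stable disks together with the consistent choice of strip-like ends, and to treat gluing as producing a map whose fibers are exactly the gluing parameters. The key input is the standard compactification of $\mathcal{T}_k$ by the Deligne--Mumford--Stasheff moduli $\overline{\mathcal{T}_k}$, whose boundary strata correspond to nodal disks, i.e.\ trees of disks each of which carries fewer than $k$ marked points. Lemma 9.2 of \cite{S} provides the gluing construction: given a choice of strip-like ends, near each boundary stratum there is a gluing map from the product of lower moduli spaces times a collection of gluing parameters (one for each node) onto a collar neighborhood of that stratum in $\overline{\mathcal{T}_k}$, and this map is a diffeomorphism onto its image for small gluing parameters. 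The consistency of the universal choice guarantees that these gluing maps are compatible across strata, so that iterated gluing is well defined and associative up to the ambiguity already recorded in the moduli.

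First I would set up the induction on $d$. For the base cases $k \leq 3$ the moduli spaces are points or have low dimension, and one simply takes $\mathcal{B}_k = \mathcal{T}_k$ itself (which is bounded), so there is nothing to glue. For the inductive step, fix a disk $\Sigma \in \mathcal{T}_d$. Using the metric on $\Sigma$ coming from the strip-like ends (the ``neck length'' coordinates), I would identify, for each sufficiently long neck, a gluing parameter; cutting $\Sigma$ along the core circles of all necks that exceed a fixed threshold $R_0$ decomposes $\Sigma$ into a collection of pieces, each of which has all its necks shorter than $R_0$, hence lies in a bounded region $\mathcal{B}_k$ of the appropriate $\mathcal{T}_k$. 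The thresholds are chosen uniformly so that the resulting $\mathcal{B}_k$ are fixed bounded regions independent of $\Sigma$.

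Next I would establish existence: the cut pieces, reassembled with the recorded neck lengths as gluing parameters, recover $\Sigma$ by the gluing map of Lemma 9.2, which is exactly the inverse of the cutting operation in the neck coordinates. For uniqueness I would argue that the decomposition into pieces with bounded necks is forced: the combinatorial type of the nodal configuration (the tree recording which necks are cut) is determined by which neck lengths exceed $R_0$, and within that type the neck-length coordinates and the moduli of the pieces are recovered bijectively from $\Sigma$. Here consistency of the strip-like ends is what guarantees that the gluing coordinates of the pieces in $\mathcal{B}_k$ agree with the ambient coordinates on $\mathcal{T}_d$, so no spurious choices enter.

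The main obstacle I anticipate is the uniqueness statement at the overlaps, that is, handling disks whose necks are of intermediate length and for which the decision to cut or not to cut is borderline. Strictly, the cutting threshold $R_0$ must be chosen so that the bounded regions $\mathcal{B}_k$ overlap the glued-in collar regions cleanly, and one must verify that two different gluing presentations of the same $\Sigma$ agree; this is where the associativity and compatibility of the consistent gluing system (again Lemma 9.2 together with the consistency of section 9g of \cite{S}) does the essential work, ensuring that the iterated gluing map is injective on the region of large gluing parameters. Once the thresholds are fixed compatibly across all $k \leq d$, existence and uniqueness follow, and boundedness of the $\mathcal{B}_k$ is immediate from the construction.
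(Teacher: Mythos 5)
Your proposal is correct and is essentially the paper's own argument made explicit: the paper gives no proof beyond remarking that the lemma is an easy consequence of Lemma 9.2 of \cite{S}, and your construction---cutting along all necks exceeding a uniform threshold $R_0$, placing the resulting pieces in the relatively compact regions $\mathcal{B}_k$ of disks with short necks, and deriving uniqueness from the injectivity of the gluing maps together with the consistency of the universal choice of strip-like ends---is precisely that consequence spelled out. The one point to keep in view is that consistency in section 9g of \cite{S} is only guaranteed near the boundary strata, so the threshold $R_0$ must be chosen large enough for the neck-length coordinates of different gluing charts to agree on overlaps, which your uniform choice of thresholds already accommodates.
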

The lemma provides a decomposition
\begin{equation} \label{decomposition}
\mathcal{T}_d=\bigcup_T \Pi_{v \in V(T)} \mathcal{B}_{|v|} \times {\R^{E^{in}(T)}}.
\end{equation}
We will refer to the components of this decomposition as vertex regions and strips regions. 


This decomposition associates to any punctured disk a ribbon tree. The boundary components of the disk correspond to the boundary components of the ribbon tree.


In the following we endow $T^*M $ with the complex structures induced from the Levi-Civita connection of the metric on $M$.

Now we want to define the moduli space of inhomogeneous pseudo-holomorphic disks. In order to define the Hamiltonian perturbation of the standard d-bar equation we need to fix a cut off function $\rho$ that is zero in the vertex regions and equal to one inside the strip regions. This can be done in the following way. On an internal strip $[0,1] \times [0,l]$, $\rho=\rho_l$ depends smoothly on the length $l$ of the strip.
Pick a real increasing function $\phi(t)$ equal to zero for $t < 0$ and equal one for $t > 1$. We put $\rho_l(s) = \phi(l) \phi(s) \phi(l-s)$. 
On an external strip $[0,1] \times [-\infty, 0]$ there is no parameter $l$. One can fix the cutoff using $\rho(s)= \phi(-s)$. 

\begin{definition} \label{problem}
Suppose that any pair of boundary components of the disks in $\mathcal{T}_k$ is associated to a Morse function on $M$. In particular, any edge $e$ of the decomposition  (\ref{decomposition}) it is associated to a Morse function $f_e$.  
Fix $\varepsilon>0$.

Let $\mathcal{M}_J(T^*M, \varepsilon f, p)$ be the moduli space of maps $u: \Sigma \rightarrow X$ such that for any boundary component $i$ 
$$u(\partial_i \Sigma) \subset L ,$$
on any vertex region
$$\bar \partial u =0 ,$$
and on an internal strip $e$ 
$$\partial_s u + J(u) (\partial_t u - \varepsilon \rho X_e (u)) =0  $$
where $X_e$ is the Hamiltonian vector field of $f_e$ (here we consider $f_e$ as a function on $T^*M$ constant along the fibers).  
\end{definition}

On each strip we can rewrite the equation as
\begin{equation} \label{strip}
\partial_s u + J(u) \partial_t u - \varepsilon \rho \nabla f_e (u) =0 .
\end{equation}

We will associate solutions of the inhomogeneous pseudo-holomorphic equation to the Morse graphs. More precisely, for a tree in the substratum of codimension $k$ we will construct a contractible family of solutions of dimension $k$ .




Fix a Morse tree in $\mathcal{M}_g^T (M, f , p)$ and a point in $\Pi_{v \in V(T)} \mathcal{B}_{|v|}$. We define a Riemannian disk $\Sigma$ using $(\ref{decomposition})$.To any edge of length $R$ there corresponds a strip of length $l$ such that 
$$R= \varepsilon \int_0^l \rho_l(s)ds .$$ 


Define a map $u: \Sigma \rightarrow T^*M $ as follows. $u$ maps any vertex region to the corresponding vertex of the Morse tree. In the strip of the edge $e$, $u$ is given by 
\begin{equation} \label{stripsolution}
u(t,s) = \gamma_e( l(s))
\end{equation}
where $l(s)= \varepsilon \int_0^s \rho $.

This construction defines a map 
\begin{equation} \label{solution}
\bigcup_{T} \mathcal{M}_g^T (M, f , p) \times \Pi_{v \in V(T)} \mathcal{B}_{|v|} \rightarrow  \mathcal{M}_J^{\varepsilon}(T^*M,  f, p).
\end{equation}
 

\begin{theorem} 
For $\varepsilon$ small enough, the map (\ref{solution}) is bijective. Taking a smaller $\varepsilon$ if necessary, the solutions of the inhomogeneous pseudo-holomorphic equation are transverse if the Morse trees are transverse. 
\end{theorem}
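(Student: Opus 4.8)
The plan is to reduce the whole statement to a single a priori estimate: that for $\varepsilon$ small every solution $u \in \mathcal{M}_J^{\varepsilon}(T^*M, f, p)$ has image contained in the zero section $L$. Granting this, bijectivity of $(\ref{solution})$ is immediate. Indeed, along $L$ the Levi-Civita complex structure $J$ interchanges $TL$ with the vertical (fibre) subspace, while the perturbation term $\varepsilon \rho \nabla f_e(u)$ is horizontal, hence tangent to $L$. Thus for a solution with image in $L$ the equation $\bar\partial u =0$ on a vertex region splits into its $TL$ and vertical parts and forces $\partial_s u = \partial_t u = 0$, so $u$ is constant there; on a strip the equation $(\ref{strip})$ likewise forces $\partial_t u = 0$ and $\partial_s u = \varepsilon \rho \nabla f_e(u)$, so $u$ is the $t$-independent reparametrised gradient trajectory $(\ref{stripsolution})$. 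Reading off the vertex values together with the edge lengths $R = \varepsilon\int_0^l \rho_l$ recovers a unique Morse gradient tree in $\mathcal{G}^T$ and a unique point of $\Pi_{v \in V(T)}\mathcal{B}_{|v|}$, which is exactly the inverse of $(\ref{solution})$.

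So the crux is the containment $u(\Sigma)\subset L$, which I would establish by a maximum principle for the squared fibre distance $\phi := \tfrac12\,|p\circ u|^2$. For the complex structure induced by the Levi-Civita connection, $\phi$ is strictly plurisubharmonic on a fixed neighbourhood of $L$, and an a priori energy bound (the energy is $O(\varepsilon)$, being a sum of differences of the $\varepsilon f_e$ along the edges) confines $u$ to such a neighbourhood. For a genuine holomorphic map plurisubharmonicity gives $\Delta(\phi\circ u)\ge 0$; the inhomogeneous term contributes a correction, but since the perturbation vector field is horizontal and $d\phi$ vanishes to first order along $L$ in the horizontal directions, this correction is $O(\varepsilon\,\phi)$. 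Hence $\Delta(\phi\circ u)\ge -C\varepsilon\,(\phi\circ u)$. Because $\phi\circ u$ vanishes on $\partial\Sigma$ (boundary on $L$) and decays at the strip-like ends (where $u$ converges to critical points in $L$), positivity of the first Dirichlet eigenvalue lets one absorb the $C\varepsilon$ term for $\varepsilon$ small, forcing $\phi\circ u\equiv 0$, i.e. $u(\Sigma)\subset L$.

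For the transversality statement I would linearise at such a solution and use the splitting of $u^*T(T^*M)$ along $L$ into horizontal and vertical parts. The linearised operator $D_u$ respects this splitting to leading order: its horizontal part corresponds to the linearisation of the finite-dimensional intersection problem $\mathcal{V}^T\cap\mathcal{E}^T$ defining the Morse tree, so its surjectivity is equivalent to transversality of the Morse trees; its vertical part is a $\bar\partial$-type operator on the disk with totally real boundary conditions along $L$, which is surjective for $\varepsilon$ small by automatic regularity. Shrinking $\varepsilon$ once more to control the off-diagonal coupling, $D_u$ is surjective precisely when $\mathcal{V}^T$ and $\mathcal{E}^T$ meet transversally, which is the claim.

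The main obstacle is the containment lemma, and within it the uniformity in $\varepsilon$: one must ensure that the plurisubharmonicity constant and the neighbourhood of $L$ are chosen independently of $\varepsilon$, that the energy bound really confines $u$ to that neighbourhood, and that the differential inequality and the eigenvalue absorption survive across the gluing and strip regions and the non-compact ends simultaneously. Checking that the cutoff $\rho_l$ and the length reparametrisation do not degrade these estimates as the internal strips become long is the delicate point; everything else is the formal bookkeeping sketched above.
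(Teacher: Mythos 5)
Your reduction of bijectivity to the containment $u(\Sigma)\subset L$ is exactly the pivot of the paper's proof as well --- the paper also shows $p\equiv 0$ and then reads off the gradient tree from (\ref{orr}) and (\ref{ver}) --- but the mechanism is different, and the paper's is markedly cheaper. Instead of a maximum principle for $\phi=\tfrac12|p|^2$ requiring plurisubharmonicity near $L$, monotonicity-based confinement to a neighbourhood of the zero section, and a Dirichlet-eigenvalue absorption uniform over the varying surface, the paper uses the exactness identity $\sum_V\int_{\partial V}u^*\theta+\sum_S\int_{\partial S}u^*\theta=0$ (the boundary lies on the zero section, where $\theta$ vanishes, and seam contributions cancel), together with per-piece nonnegativity: on vertex regions $\int_{\partial V}u^*\theta=\int_Vu^*\omega\ge 0$ by holomorphicity, and on strips convexity of the one-variable function $\beta(s)=\tfrac12\int_0^1|p|^2\,dt$. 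The decisive trick, absent from your sketch, is how the dangerous cross term is absorbed: in computing $\ddot\beta$ one substitutes $\partial_tq=\nabla_sp$ from (\ref{ver}), so the perturbation contributes only $\varepsilon\rho\langle p,\nabla(\nabla f)\partial_tq\rangle\le C\varepsilon|p|\,|\nabla_sp|$, which is dominated by $|\nabla_sp|^2+|\nabla_tp|^2$ (the latter controls $|p|^2$ via the Poincar\'e inequality in $t$, since $p$ vanishes at $t=0,1$). Thus no energy quantization, no gradient bounds, no confinement lemma, and no inradius-uniform eigenvalue estimate are ever needed --- precisely the points you flag as delicate never arise. Your route is probably completable (note that $d\phi$ annihilates horizontal vectors identically for the metric induced by the Levi-Civita connection, so the cutoff-derivative term drops out of your differential inequality, and confinement needs an explicit monotonicity input), but it is strictly heavier than what the statement requires.

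Your transversality argument, by contrast, has a genuine gap. The linearized operator does not split, even to leading order, into a ``horizontal block equal to the finite-dimensional intersection problem'' and a ``vertical $\bar\partial$-block with totally real boundary conditions'': $J$ interchanges the horizontal and vertical subspaces along $L$, so the Cauchy--Riemann part of (\ref{striplinear}) intrinsically couples the two components; there is no well-defined vertical sub-operator whose surjectivity you could invoke, and ``automatic regularity'' is not available here ($\dim M$ is arbitrary, and automatic transversality is a low-dimensional phenomenon). Moreover, the step ``shrink $\varepsilon$ to control the off-diagonal coupling'' needs a bound on the right inverse of the diagonal part that is uniform in the strip lengths, which degenerate to infinity along the adiabatic family; supplying that uniformity is exactly where a naive perturbation argument fails. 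The paper avoids quantitative inverse bounds altogether: it reruns the $\beta$-convexity argument on the linearized equation to identify $\ker D$ with $\ker D_0\oplus\bigoplus_v T\mathcal{B}_{|v|}$ (matching solutions of (\ref{reduction}) with solutions of (\ref{morselinear}), the modular direction being generated by $\tilde\psi\,\dot\gamma_e$), proves $\Coker D_0=(T\mathcal{V}+T\mathcal{E})^\perp$ so that Morse transversality is equivalent to surjectivity of $D_0$, and then concludes surjectivity of $D$ by pure index bookkeeping: $\mathrm{ind}\,D=\mathrm{ind}\,D_0+\sum_v\dim\mathcal{B}_{|v|}$, so when $D_0$ is onto the kernel of $D$ already realizes the full index and $\Coker D=0$. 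To repair your version, either adopt this kernel-identification-plus-index scheme or prove an $\varepsilon$- and length-uniform spectral gap for the genuinely coupled operator; as written, the splitting and the appeal to automatic regularity do not stand.
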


\section{Surjectivity}


Let $u : \Sigma \rightarrow T^*M$ be a solution of (\ref{problem}). Let $T$ be the tree associated to $\Sigma$ by the decomposition $(\ref{decomposition})$.

Write $u=(q,p)$ where $q \in  M$ and $p \in T^*_q M$. The symplectic form $\omega$ on $T^*M$ is given by the differential of the one form $\theta= \langle p, dq \rangle$. 

\begin{lemma} 
For any strip $S$ 
\begin{equation} \label{se}
\int_{\partial S} u^*(\theta) \geq 0 
\end{equation}
and for any vertex region $V$ 
\begin{equation} \label{ve}
\int_{\partial V} u^*(\theta) \geq 0 .
\end{equation}
\end{lemma}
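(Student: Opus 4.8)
The plan is to prove both inequalities via energy identities, exploiting the special structure of $\theta = \langle p, dq\rangle$ together with the boundary condition $u(\partial_i\Sigma)\subset L$, where $L$ is the zero section $\{p=0\}$. The key observation is that on the zero section $\theta$ vanishes identically, so only the portions of $\partial S$ and $\partial V$ that lie \emph{interior} to $\Sigma$ (the cut curves between vertex regions and strips, and the ends of strips) can contribute to the boundary integrals. This localizes the problem: the sum of all these contributions over the whole decomposition $(\ref{decomposition})$ must telescope, and establishing positivity region by region is what lets one later match solutions to Morse trees.

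First I would treat the vertex regions. There $\bar\partial u = 0$, so $u$ is genuinely $J$-holomorphic, and by Stokes $\int_{\partial V} u^*\theta = \int_V u^*\omega = \int_V u^*(d\theta)$, which is the symplectic area. For a $J$-holomorphic map with $\omega(\cdot, J\cdot)$ equal to the metric, this integrand is $\tfrac12|du|^2\,\mathrm{dvol}\ge 0$ pointwise, giving $(\ref{ve})$ immediately. This is the easy half.

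For the strips the equation is inhomogeneous, $(\ref{strip})$: $\partial_s u + J(u)\partial_t u - \varepsilon\rho\nabla f_e(u)=0$. Here I would again write $\int_{\partial S} u^*\theta = \int_S u^*\omega$ by Stokes and compute $u^*\omega$ using the perturbed equation. Writing $\omega(v,w)=\langle Jv,w\rangle$ in the metric, one substitutes $\partial_t u = -J\partial_s u + \varepsilon\rho\,\nabla f_e$ into $u^*\omega = \omega(\partial_s u,\partial_t u)\,ds\wedge dt$. The $J$-linear term contributes $|\partial_s u|^2\ge 0$, and the remaining cross term is proportional to $\varepsilon\rho\,\omega(\partial_s u, \nabla f_e)$; since $f_e$ is pulled back from the base and $\nabla f_e$ is tangent to the fiber-constant directions while $\omega$ pairs base with fiber, I expect this cross term to integrate to something controllable — ideally a sign-definite or exact piece absorbed into the boundary, or else bounded by the positive term for $\varepsilon$ small. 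This is where the precise interaction between $J$ (induced by the Levi--Civita connection), $\theta$, and the Hamiltonian $f_e$ must be used carefully.

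The main obstacle, then, is the perturbation term on the strips: unlike the vertex regions, the pointwise integrand is not manifestly nonnegative, so I would expect the real work to lie in showing that the Hamiltonian contribution either is itself nonnegative by the fiber/base splitting of $\omega$ and the fact that $X_e$ is $\omega$-dual to $df_e$, or can be rewritten as a boundary term along the zero section where it vanishes. A clean route is to use that $\omega(X_e,\cdot)=-df_e$ and that along $L$ the function $f_e$ and its flow are the geometric data encoding the Morse trajectory $\gamma_e$, so that the cross term reassembles into $\varepsilon\rho\,\tfrac{d}{ds}f_e(\text{base point})$ — an exact derivative whose integral is the (nonnegative) Morse energy drop along the gradient segment. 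Verifying this reduction rigorously, with the cutoff $\rho$ present, is the step I anticipate being delicate.
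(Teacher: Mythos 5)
Your vertex-region argument is exactly the paper's: Stokes plus $\bar \partial u=0$ gives $\int_{\partial V}u^*\theta=\int_V u^*\omega\geq 0$, and that half is fine. The strip half, however, has a genuine gap, and the pointwise route you sketch cannot be repaired. Carrying out your substitution: from (\ref{strip}) one gets $\partial_t u = J\partial_s u - \varepsilon\rho\, J\nabla f_e(u)$, hence
$$u^*\omega = \left( |\partial_s u|^2 - \varepsilon \rho\, \partial_s \bigl( f_e \circ u \bigr) \right) ds \wedge dt ,$$
and the cross term enters with a \emph{minus} sign. On the expected solutions (\ref{stripsolution}) one has $\partial_s u = \varepsilon\rho\nabla f_e(u)$, so the two terms cancel identically and $u^*\omega\equiv 0$: the cross term is of the same order $\varepsilon^2\rho^2|\nabla f_e|^2$ as the positive term, so it can never be absorbed by taking $\varepsilon$ small. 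Your fallback — that the cross term is an exact derivative whose integral is a nonnegative Morse energy drop — fails twice over: the cutoff blocks exactness ($\rho\,\partial_s(f_e\circ u)$ is not $\partial_s(\rho\, f_e\circ u)$; integrating by parts leaves $-\rho'\, f_e(u)$, of indefinite sign since $\rho'$ changes sign), and more fundamentally, asserting that $f_e\circ u$ increases in $s$ presupposes that $u$ already follows a gradient segment, which is precisely what the surjectivity argument is trying to establish — the reasoning is circular, and with the correct sign the term cancels rather than reinforces the positive part.

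The paper's proof avoids pointwise positivity of $u^*\omega$ altogether (it cannot hold: for curves in the image of (\ref{solution}) the inequality (\ref{se}) is an equality with vanishing integrand). Instead it splits (\ref{strip}) into horizontal and vertical parts via the Levi--Civita connection, writes $u=(q,p)$, and proves a \emph{convexity} statement for the fiber momentum: setting $\beta(s)=\frac{1}{2}\int_0^1 |p(t,s)|^2\,dt$, a computation gives $\ddot\beta = \int_0^1 (|\nabla_s p|^2+|\nabla_t p|^2 - \varepsilon\rho\langle p,\nabla(\nabla f_e)\partial_t q\rangle)\,dt$. Here smallness of $\varepsilon$ genuinely helps, because the error term is \emph{quadratic} in $(p,\nabla p)$ — and since $p$ vanishes on the lateral boundary of the strip (the boundary maps to the zero section), a Poincar\'e inequality in $t$ lets it be absorbed into $\int(|\nabla_s p|^2+|\nabla_t p|^2)$, giving $\ddot\beta\geq 0$. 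Since $\theta=\langle p,dq\rangle$ pulls back to zero on the lateral sides, $\int_{\partial S}u^*\theta = \dot\beta(l)-\dot\beta(0)\geq 0$ by monotonicity of $\dot\beta$ (with $\beta,\dot\beta\to 0$ at infinity handling external strips). The structural point you missed is that the usable positivity on strips lives in the second derivative of the fiber energy, quadratic in the distance from the zero section, not in the pointwise symplectic area density.
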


\begin{proof}
Fix a strip $e$. In the following we will omit the suffix $e$ in the notation. Equation (\ref{strip}) can be rewritten as 
\begin{equation} \label{orr}
\partial_s q + \nabla_t p = \varepsilon \rho \nabla f (q)
\end{equation}
\begin{equation} \label{ver}
\partial_t q - \nabla_s p =0 .
\end{equation}
Define the function
$$\beta(s)= \frac{1}{2}\int_0^1 |p(t,s)|^2 dt.$$
Using (\ref{orr}) and (\ref{ver}) we have
$$\dot \beta = \int_0^1 \langle p, \partial_t q \rangle dt $$
$$
\begin{array} {ll}
\ddot{\beta} &= \int_0^1 (\langle \nabla_s p ,\partial_t q \rangle  +  \langle p, \nabla_s \partial_t q  \rangle) dt 
= \int_0^1 ( |\nabla_s p |^2  -  \langle  p,  \nabla_t (\varepsilon \rho \nabla f - \nabla_t p) \rangle) dt  \\ 
& = \int_0^1( |\nabla_s p|^2 + | \nabla_t p |^2 - \varepsilon \rho \langle p , \nabla(\nabla f) \partial_t q \rangle) dt . \\
\end{array}
$$
Therefore
$$ \ddot{\beta} \geq \int_0^1( |\nabla_s p |^2 + |\nabla_t p|^2 ) dt - C \varepsilon \rho  \int_0^1 |\nabla_s p | |p| dt $$
and in particular $\ddot{\beta} $ is not negative for small enough $\varepsilon$. From this follows (\ref{se}) (observe that if $e$ is external $\beta$ and its derivative goes to zero at infinity).

The inequality (\ref{ve}) follows from
$$\int_{\partial V} u^*(\theta)= \int_V u^*(\omega) \geq 0 .$$

\end{proof}


It follows that in the identity 
$$ \sum_V  \int_{\partial V}  u^*(\theta) + \sum_S  \int_{\partial S} u^*(\theta) =0 $$
all the terms in the sums are not negative, hence they must be zero. In other words, the energy of the curve on any vertex region has to be zero and $\dot \beta$ on any strip has to be constant. In particular $u$ is constant on any vertex region. Hence $\dot \beta$ is zero. It follows that $\beta$ (and therefore $p$) is identically zero on any strip. Now, on any strip $e$, (\ref{orr}) implies that $u$ does not depend on $t$ and (\ref{ver}) implies that $\partial_s q = \varepsilon \rho \nabla f_e(q)$. Therefore $q=\gamma_e(\varepsilon l(s))$ for some solution $\gamma_e$ of equation (\ref{morseeq}). These data define a Morse tree of $T$. It follows that $u$ is in the image of (\ref{solution}).

\section{Transversality}

In this section we will prove that, for $\varepsilon$ small enough, the solutions of problem \ref{problem} are transversal if the Morse transversality holds.

In order to see the relation between Morse and Floer transversality we need to reformulate the Morse transversality in terms of the linearization of the Morse equations (\ref{morseeq}). 

The linearization of the Morse tree equation gives a map 
\begin{equation} \label{graphlinear}
D_0 : L^2_1(\gamma^*(TM)) \oplus \R^{E^{in}(T)} \rightarrow L^2(\Omega^1 (\gamma^*(TM))).
\end{equation}
Here $\gamma^*(TM)$ denotes the unions of the $\gamma_e^*(TM)$ attached to the vertices. A section in $L^2_1(\gamma^*(TM))$ is defined as a section in $L^2_1(\gamma_e^*(TM))$ on each $e \in E$ so that they are compatible at the vertices.

In order to define the linearization we need to fix for any internal edge $e$ a real function $\chi_e$ with compact support. 
The restriction of $D_0^e$ on an internal edge $e$ is given by 
\begin{equation} \label{morselinear}
D_0^e( \xi, \lambda) = \nabla_t \xi - \nabla (\nabla f_e) (\xi) - \lambda_e \chi_e \dot \gamma.
\end{equation}
Here $\lambda_e$ is the infinitesimal generator of the rescaling of the metric $\langle , \rangle \mapsto e^{2 \lambda_e \chi_e} \langle , \rangle$.
For an external edge $e$ there is not the modular parameter in (\ref{morselinear}).

\begin{lemma} 
$$ \ker D \cong T \mathcal{G} .$$
\end{lemma}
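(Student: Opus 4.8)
The plan is to identify the kernel of the linearized operator $D_0$ in (\ref{graphlinear}) with the tangent space $T\mathcal{G}^T$ of the moduli space of Morse gradient trees, by exploiting the description $\mathcal{G}^T = \mathcal{V}^T \cap \mathcal{E}^T$ from Section~2. First I would recall that a tangent vector to $\mathcal{G}^T$ at a given Morse tree is, by the transverse intersection description, a collection of deformations of the gradient trajectories $\gamma_e$ together with deformations of the edge lengths $L(e)$, subject to the constraint that the deformed trajectories still solve (\ref{morseeq}) and still match at the vertices. The key observation is that the equation $D_0^e(\xi,\lambda)=0$, read on each internal edge as $\nabla_t \xi - \nabla(\nabla f_e)(\xi) = \lambda_e \chi_e \dot\gamma$, is precisely the linearization of (\ref{morseeq}) along $\gamma_e$, where $\xi$ is the infinitesimal variation of the trajectory and the $\lambda_e$-term accounts for the infinitesimal change of edge length (encoded through the rescaling generator with cutoff $\chi_e$).

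The core of the argument is therefore a dictionary between analytic and geometric data. I would proceed by fixing a Morse tree $\gamma \in \mathcal{G}^T$ and constructing the isomorphism in both directions. Given $(\xi,\lambda) \in \ker D_0$, on each edge $\xi_e$ solves the linearized Morse equation with inhomogeneity proportional to $\lambda_e \chi_e \dot\gamma$; since $\chi_e$ has compact support, integrating this linear ODE shows that $\xi_e$ differs from a genuine Jacobi-type solution of the homogeneous equation by a term whose net effect is an infinitesimal reparametrization of the trajectory length by $\lambda_e$ (here one uses that $\dot\gamma$ is itself a solution of the homogeneous linearized equation, as it generates translation along the flow). The compatibility of $\xi$ at the vertices, built into the definition of $L^2_1(\gamma^*(TM))$, is exactly the linearized form of the matching conditions defining $\mathcal{V}^T$; the requirement that each $\xi_e$ solve the linearized Morse equation is the linearized form of the conditions defining $\mathcal{E}^T$. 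Thus $(\xi,\lambda)$ gives a tangent vector to $\mathcal{V}^T \cap \mathcal{E}^T = \mathcal{G}^T$, and conversely any such tangent vector, by choosing representatives adapted to the cutoffs $\chi_e$, yields an element of $\ker D_0$.

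I expect the main obstacle to be handling the modular parameters $\lambda_e$ and the auxiliary cutoff functions $\chi_e$ correctly, so that the identification is canonical and independent of the choice of $\chi_e$. The subtle point is that changing $\chi_e$ changes the representative $\xi$ by an exact variation (a shift by a multiple of $\dot\gamma$ localized where $\chi_e$ varies), so one must verify that the induced map to $T\mathcal{G}^T$ descends to a well-defined isomorphism on the level of the actual deformation of the trajectory. I would address this by showing that two choices of $\chi_e$ with the same total integral $\int \chi_e$ produce kernel elements differing by a term lying in the image of the reparametrization action, which maps to zero in $T\mathcal{G}^T$; normalizing $\int\chi_e = 1$ then makes $\lambda_e$ correspond exactly to the infinitesimal length change $dL(e)$. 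Once this bookkeeping is done, injectivity and surjectivity of the constructed map follow from the standard fact that solutions of a linear first-order ODE are determined by their matching data at the vertices, completing the identification $\ker D \cong T\mathcal{G}$.
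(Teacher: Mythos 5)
Your proposal is correct and follows essentially the same route as the paper: an edge-by-edge identification of $\ker D_0^e$ with $TM_e$ via the linearized Morse flow, with the particular solution $\psi_e\dot\gamma_e$ (for $\psi_e$ a primitive of $\chi_e$) absorbing the modular parameter $\lambda_e$, and the vertex matching conditions and edge equations giving exactly $T\mathcal{V}^T$ and $T\mathcal{E}^T$, so that $\ker D_0 \cong T\mathcal{V}^T \cap T\mathcal{E}^T = T\mathcal{G}^T$. Your additional verification that the identification is independent of the choice of $\chi_e$ (up to reparametrization, after normalizing $\int\chi_e$) is a sound refinement the paper leaves implicit, but it does not change the argument.
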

\begin{proof}
Observe first that for any $e \in E$
\begin{equation} \label{tangent0}
\ker D_e \cong TM_e. 
\end{equation}
This can be seen as follows. Suppose first that $e$ is internal. The space of solutions of equation (\ref{morselinear}) with $\lambda_e=0$ is a vector space of dimension $n$. A solution of (\ref{morselinear}) with $\lambda_e=1$ is given by the vector $\psi_e \dot \gamma_e$, where $\psi_e$ is a primitive of $\chi_e$. The linear map $(\xi_e,\lambda_e) \mapsto (\xi_e(0), \xi_e(l_e))$ gives (\ref{tangent0}).
If $e$ is external (\ref{tangent0}) is given by the map $\xi_e \mapsto \xi_e(0)$.

Now let $\xi \in \ker D$. Observe that $\xi$ has to be smooth. Define $v_\xi \in TM^H$ the vector whose components in $e \in H$ is given by the value of $\xi$ at the starting point of $e$. It is immediate to see that $v \in T \mathcal{V}$.
From (\ref{tangent0}) follows that $v \in T \mathcal{E}$.


\end{proof}

\begin{lemma} 
$$ \Coker D = (T\mathcal{V} + T\mathcal{E})^\bot.$$
In particular the functions $f_i$ are transversal if and only if $D$ is surjective. 
\end{lemma}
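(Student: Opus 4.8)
The plan is to obtain the cokernel from a snake-lemma argument running in parallel with the computation of $\ker D$ just completed, so that the analysis reduces to the finite-dimensional linear algebra of the two subspaces $T\mathcal{V},\,T\mathcal{E}\subset TM^{H(T)}$. First I would decouple the edges: let $\tilde D=\bigoplus_{e}D_e$ act on $\bigoplus_e L^2_1(\gamma_e^*(TM))\oplus\R^{E^{in}(T)}$ with no compatibility imposed at the vertices, and let $\mathcal{D}$ be the subspace on which the values at the half-edge base points agree at each vertex, so that $D$ is the restriction of $\tilde D$ to $\mathcal{D}$ and both operators share the target $Y=L^2(\Omega^1(\gamma^*(TM)))$.

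The structural inputs I would assemble are: (i) each $D_e$ is surjective, hence $\tilde D$ is surjective and $\operatorname{coker}\tilde D=0$; (ii) by (\ref{tangent0}), evaluation at the half-edge base points (equivalently, at the endpoints of the edges) identifies $\ker\tilde D=\bigoplus_e\ker D_e$ with $\bigoplus_e TM_e=T\mathcal{E}$; and (iii) this same evaluation map $\operatorname{Ev}\colon\tilde{\mathcal{D}}\to TM^{H(T)}$ is surjective, so that $q:=\pi\circ\operatorname{Ev}$ maps onto $TM^{H(T)}/T\mathcal{V}$, where $\pi$ is the quotient by $T\mathcal{V}=\bigcap_v\pi_v^{-1}\Delta_v$. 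By construction $\mathcal{D}=\ker q$, which gives a short exact sequence $0\to\mathcal{D}\to\tilde{\mathcal{D}}\xrightarrow{q}TM^{H(T)}/T\mathcal{V}\to 0$ covering the map of complexes whose left vertical arrow is $D$, middle is $\tilde D$, and right is $0$.

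Applying the snake lemma and using $\operatorname{coker}\tilde D=0$ then yields the exact sequence
$$0\to\ker D\to\ker\tilde D\xrightarrow{\ \delta\ }TM^{H(T)}/T\mathcal{V}\to\operatorname{coker} D\to 0.$$
The connecting homomorphism $\delta$ is precisely $q|_{\ker\tilde D}$, i.e. the composite $\ker\tilde D\cong T\mathcal{E}\hookrightarrow TM^{H(T)}\xrightarrow{\pi}TM^{H(T)}/T\mathcal{V}$, whose image is $(T\mathcal{V}+T\mathcal{E})/T\mathcal{V}$; its kernel reproduces $\ker D\cong T\mathcal{V}\cap T\mathcal{E}=T\mathcal{G}$, consistently with the previous lemma. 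Hence $\operatorname{coker} D\cong\operatorname{coker}\delta\cong TM^{H(T)}/(T\mathcal{V}+T\mathcal{E})$, and identifying this quotient with its orthogonal complement through the Riemannian inner product on $TM^{H(T)}$ gives $\Coker D=(T\mathcal{V}+T\mathcal{E})^\bot$. Surjectivity of $D$ is then equivalent to $T\mathcal{V}+T\mathcal{E}=TM^{H(T)}$, that is, to the transversality of $\mathcal{V}$ and $\mathcal{E}$, which is the definition of transversality of the $f_i$.

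The main obstacle is the analytic content in (i) and (iii): the surjectivity of the single-edge operators $D_e$ and of the evaluation $\operatorname{Ev}$. On an internal edge this is elementary first-order ODE theory (prescribe the initial value and integrate $\nabla_t\xi-\nabla(\nabla f_e)\xi=g+\lambda\chi_e\dot\gamma$), but on the external, semi-infinite edges one must work in the appropriate weighted $L^2_1\to L^2$ spaces and control the asymptotics at the critical point, so that $D_e$ remains Fredholm and surjective while endpoint evaluation at the finite end stays onto. Once this is in place, the remaining steps—the snake-lemma bookkeeping and the inner-product duality—are routine.
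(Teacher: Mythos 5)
Your proposal is correct, but it takes a genuinely different route from the paper. The paper argues by duality: it computes the formal adjoint $D_e^*$ edge by edge, with the boundary terms (\ref{adjoint})--(\ref{adjoint1}) coming from integration by parts, shows that any $\eta$ in $\coker D$ solves $D_e^*\eta=0$ on every edge (hence is smooth), and then reads off a vector $v_\eta\in TM^{H(T)}$ from the boundary values of $\eta$, which the two families of test sections --- those supplied by (\ref{tangent0}) and those with prescribed vertex values --- force into $T\mathcal{E}^\bot\cap T\mathcal{V}^\bot$. You instead trade the adjoint computation for two surjectivity statements: each single-edge operator $D_e$ is onto (elementary ODE theory on internal edges; on external edges, Fredholm theory for an operator with hyperbolic asymptotics at the nondegenerate critical point, for which plain $L^2_1\to L^2$ already suffices --- the weighted spaces you worry about are not needed here, since the limiting operator is invertible), and endpoint evaluation is onto $TM^{H(T)}$; the snake lemma then does the bookkeeping and yields $\coker D\cong TM^{H(T)}/(T\mathcal{V}+T\mathcal{E})$. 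What your route buys: it computes the cokernel exactly, so both directions of the ``if and only if'' are immediate, whereas the paper's proof as written only exhibits the map $\eta\mapsto v_\eta$ into $(T\mathcal{V}+T\mathcal{E})^\bot$ and leaves its injectivity and surjectivity implicit (both follow, e.g.\ from uniqueness for the adjoint ODE and a dimension count, but they are not spelled out). What the paper's route buys: an explicit description of cokernel elements as solutions of the adjoint equation, which is the shape of argument one wants to reuse for the pseudo-holomorphic operator in the next part of the paper. One slip to fix: in your four-term exact sequence the connecting homomorphism of the snake lemma is the map $TM^{H(T)}/T\mathcal{V}\to\coker D$, not the arrow $\ker\tilde D\to TM^{H(T)}/T\mathcal{V}$, which is simply the restriction of $q$ to kernels; the sequence you wrote is nevertheless the correct one, so the conclusion is unaffected.
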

\begin{proof}

On any $e \in E^{in}(T)$, the adjoint $D_e^*$ of $D_e$, satisfies 
\begin{equation} \label{adjoint}
\langle D_e (\xi,\lambda), \eta \rangle + \langle (\xi,\lambda), D_e^* \eta \rangle = \langle \xi(L(e)), \eta(L(e))(\partial_t) \rangle - \langle \xi(0), \eta(0)(\partial_t) \rangle .   
\end{equation}
For $e \in E^{ex}(T)$ we have
\begin{equation} \label{adjoint1}
\langle D_e \xi, \eta \rangle + \langle \xi, D_e^* \eta \rangle = \langle \xi(0), \eta(0)(\partial_t) \rangle.   
\end{equation}

Take $\eta \in L^2(\Omega^1(T^*(TM)))$ in $\coker D$, that is
$$ \langle D (\xi,\lambda), \eta \rangle =0$$ 
for any $\xi \in L^2_1(T^*(TM))$ and $\lambda \in T\mathcal{L}$. 
Using equations (\ref{adjoint}) and (\ref{adjoint1}) for $\xi$ having compact support, $\eta$ satisfies $D_e^*\eta=0$ for any edge $e$. In particular $\eta$ is smooth. 

For any $e \in H$ evaluate $\eta$ on the positive unit vector in the starting point of $e$. This defines a vector $v_\eta \in TM^H$. 

Let $v \in TM_e$ for $e \in E$. Apply equations (\ref{adjoint}) or (\ref{adjoint1}) where $(\xi,\lambda)$ or $\xi$ is the vector associated to $v$ by (\ref{tangent0}). This implies that $\langle v, v_\eta \rangle =0$. Therefore $v_\eta \in T\mathcal{E}^\bot$.

Let $v \in T\mathcal{V} $ and let $\xi \in L^2_1(T^*(TM))$ such that the value at the vertices is given by $v$. From (\ref{adjoint}) and (\ref{adjoint1}) we have $ \langle v, v_{\eta} \rangle= \langle D (\xi,0), \eta \rangle = 0 $. Therefore $v_\eta \in T\mathcal{V}^\bot$. 
\end{proof}

Now we consider the linearization of the inhomogeneous pseudoholomorphic equation (Definition \ref{problem})
$$D : L^p_1(u^*(T(T^*M))) \oplus \R^{E^{in}(T)} \oplus \bigotimes_{v \in V(E)} T \mathcal{B}_{|v|} \rightarrow L^p(\Omega^{(0,1)} (u^*(T(T^*M)))). $$
The terms $T \mathcal{B}_{|v|}$ correspond to the modular parameter of the vertex regions and are mapped to zero. The terms $\R^{E^{in}(T)}$ correspond to the modular parameter of the internal strips. We fix a generator of these moduli parameters assigning to any internal edge $e$ a compact support function $\chi_e$ on the edge. The infinitesimal deformation associated on the strip is the rescaling of the metric on the $s$ direction by $\chi_e$ (where we consider $\chi_e$ as a function independent on $t$).  


On each vertex region the operator $D$ is simply the standard $\bar \partial$ operator with values in $\C^n$.
On a strip $e$ it is the linearization of (\ref{strip}):
\begin{equation} \label{striplinear}
D^e(\xi_e,\lambda_e)=\nabla_s \xi_e + J(u) \nabla_t \xi_e - \varepsilon \lambda \tilde \chi_e \nabla f_e - \varepsilon \rho \nabla(\nabla f_e) \xi_e.
\end{equation}

The function $\tilde \chi_e$ comes from two contributions.  
The first is the derivative of $\partial_s u$ with respect to $\lambda_e$, that is $ \chi_e \partial_s u$. This becomes $\varepsilon \chi_0 \rho \nabla f$ after substituting the explicit expression of $u$ given by (\ref{stripsolution}).  
The second is the derivative of $\rho_l$ with respect to $l$. This is a nonnegative function because of our choice of the cutoff $\rho_l$. 
Therefore $\tilde \chi_e$ is non negative. In the following we only need that the integral of $\tilde \chi_e$ is not zero. 

\begin{lemma}
For $\varepsilon$ small enough
$$ \ker D \cong \ker D_0 \otimes \bigotimes_{v \in V(E)} T \mathcal{B}_{|v|} .$$
\end{lemma}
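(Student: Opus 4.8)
The plan is to exploit the block structure of $D$ over the decomposition \eqref{decomposition} of $\Sigma$ into vertex regions and strips, and to transport the surjectivity argument of the previous section to the linear setting. The target splits into two independent pieces: the vertex--domain deformations $\bigoplus_{v} T\mathcal{B}_{|v|}$, which by construction are sent to zero by $D$ and hence lie in $\ker D$, and the Morse kernel $\ker D_0$, realized by sections that are $t$--independent on the strips and constant on the vertex regions. I would first dispose of the easy inclusion $\ker D_0 \oplus \bigoplus_v T\mathcal{B}_{|v|} \hookrightarrow \ker D$ and then concentrate on showing it is onto, which is the real content of the lemma.

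For the inclusion, given a Morse kernel element, i.e.\ a family $\{\xi_e\}$ solving \eqref{morselinear} and compatible at the vertices, I define $\xi$ on each strip $e$ by $\xi(t,s)=\xi_e(l(s))$ (independent of $t$, with vanishing vertical component), reparametrizing by the Morse time exactly as in \eqref{stripsolution}, and extend $\xi$ by the corresponding constant on each vertex region. On a strip the linearized equation \eqref{striplinear} then collapses to \eqref{morselinear}, so $D\xi=0$; on a vertex region $u$ is constant and $\xi$ is its constant value, which is $\bar\partial$--holomorphic; the values match at the interfaces because of the compatibility encoded in the identification $\ker D_0 \cong T\mathcal{G}$. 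Adjoining the deformations of the $\mathcal{B}_{|v|}$, which act trivially on the constant maps, gives the injection, so it remains to prove surjectivity.

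The heart of the proof is that for $\varepsilon$ small every $\xi\in\ker D$ is, on each strip, $t$--independent with vanishing vertical component, and constant on each vertex region. Writing $\xi=(\alpha,\beta)$ in the horizontal/vertical splitting $T(T^*M)=TM\oplus T^*M$ along the zero section, the two components of \eqref{striplinear} take the same shape as \eqref{orr}--\eqref{ver}, with $(\alpha,\beta)$ in the role of $(q,p)$ and the Hessian term $\varepsilon\rho\,\nabla(\nabla f_e)$ as the only coupling. Setting $b(s)=\tfrac12\int_0^1|\beta(t,s)|^2\,dt$, the computation that produced $\ddot\beta$ in the surjectivity Lemma yields $\ddot b \geq \int_0^1\bigl(|\nabla_s\beta|^2+|\nabla_t\beta|^2\bigr)\,dt - C\varepsilon\int_0^1|\nabla_s\beta|\,|\beta|\,dt$, so $\ddot b\geq 0$ once $\varepsilon$ is small. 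On the vertex regions $D$ is the honest $\bar\partial$ operator, so the linearized energy there is nonnegative, exactly as $\int_V u^*\omega\geq 0$ in \eqref{ve}. The linear analogue of the identity $\sum_V\int_{\partial V}u^*\theta+\sum_S\int_{\partial S}u^*\theta=0$ then forces every term to vanish: $\dot b$ is constant on each strip and tends to zero on the external edges, hence $b\equiv 0$ and $\beta\equiv 0$; with $\beta$ gone, \eqref{ver} gives $\nabla_t\alpha=0$, so $\xi$ is $t$--independent, and on each vertex region $\xi$ is forced to be constant. What remains on each strip is precisely \eqref{morselinear}, and the interface conditions reproduce the compatibility defining $\ker D_0$, so $\xi$ lies in the image of the injection above.

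I expect the main obstacle to be the uniform control of the $O(\varepsilon)$ coupling term in $\ddot b$ for all strips simultaneously --- in particular on the internal strips of finite length, where one cannot appeal to decay at infinity and must instead close the estimate through the global energy identity and the matching with the vertex regions, where the operator is genuinely holomorphic. This is exactly the point at which the inhomogeneous perturbation pays off: the convexity inequality for $b$ is nothing but the linearization of the one already established for $\beta$, so a single smallness threshold on $\varepsilon$ suffices and no iteration or delicate adiabatic analysis is required.
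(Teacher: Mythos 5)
Your overall strategy is the paper's: the paper likewise proves the lemma by running the convexity argument of the surjectivity section on the linearized equation (splitting \eqref{striplinear} into horizontal and vertical parts, showing the vertical component $\beta$ satisfies $\ddot b\geq 0$ for small $\varepsilon$, and concluding via the global identity that $\xi$ is constant on vertex regions, $t$-independent on strips, and purely horizontal), and then identifies what is left on the strips with the Morse kernel. That part of your proposal is sound and matches the paper.

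However, there is a genuine flaw in how you handle the internal-strip modular parameters $\lambda\in\R^{E^{in}(T)}$, in both directions of your argument. A Morse kernel element is a pair $(\{\xi_e\},\lambda)$, and when $\lambda_e\neq 0$ your transported section $\xi(t,s)=\xi_e(l(s))$ does \emph{not} solve \eqref{striplinear}: computing $\nabla_s\xi=\varepsilon\rho\,\nabla_t\xi_e(l(s))$ and substituting \eqref{morselinear} leaves the residual term
\begin{equation*}
\varepsilon\lambda_e\bigl(\rho(s)\,\chi_e(l(s))-\tilde\chi_e(s)\bigr)\nabla f_e\bigl(\gamma_e(l(s))\bigr),
\end{equation*}
which does not vanish because the Morse cutoff $\chi_e$ and the Floer cutoff $\tilde\chi_e$ are unrelated functions. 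Symmetrically, in your surjectivity step the residual equation on a strip is \eqref{reduction} (with $\tilde\chi_e$), not ``precisely \eqref{morselinear}'' as you claim, so the interface conditions do not literally ``reproduce the compatibility defining $\ker D_0$.'' The paper bridges this by \emph{not} matching the equations pointwise: it matches the two kernels edge by edge through the evaluation maps of \eqref{tangent0}, exhibiting for $\lambda_e=1$ the explicit solution $\tilde\psi(s)\,\dot\gamma_e(l(s))$ of \eqref{reduction}, where $\tilde\psi$ is a primitive of $\tilde\chi_e$, in parallel with the Morse-side solution $\psi_e\dot\gamma_e$; both reduced kernels are then isomorphic to $TM_e$ via endpoint evaluation, and gluing at vertices identifies both global kernels with $T\mathcal{V}\cap T\mathcal{E}=T\mathcal{G}$. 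Note that this step is exactly where the paper's earlier observation that $\tilde\chi_e$ is nonnegative with nonzero integral is used: it guarantees the $\lambda$-direction solution contributes an independent direction, so the evaluation map is onto $TM_e$. Your proof can be repaired along these lines, but as written the map you construct is not even well defined into $\ker D$ on the $\lambda\neq 0$ directions, and this bookkeeping is the one nontrivial algebraic point of the lemma.
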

\begin{proof}
Suppose that $(\xi,\lambda)$ is in the kernel of $D$. As before on any strip we can split the equation into horizontal and vertical parts and prove that for $\varepsilon$ small enough $\xi$ is constant on all the vertex regions, depending only on $s$ along the strips and without vertical component. On any strip $e$ equation (\ref{striplinear}) becomes 
\begin{equation} \label{reduction}
\nabla_s \xi_e - \varepsilon \tilde \chi_e  \nabla f_e - \varepsilon \rho \nabla(\nabla f_e) \xi_e =0.
\end{equation}
where $\xi_e$  does not depend by $t$. As in $(\ref{tangent0})$
the space of solutions of (\ref{reduction}) on the edge $e$ is isomorphic to $TM_e$ . This can be seen as follows.
If $\xi_e^0$ is in the kernel of (\ref{morselinear}) with $\lambda=0$, the vector $\xi_e$ defined by $\xi_e(t,s)= \xi_e^0(l(s))$ solves (\ref{reduction}). If $\tilde \psi$ is a primitive of $\tilde \chi$,  $\tilde \psi (s)\dot \gamma_e( l(s))$ solves (\ref{reduction}) for $\lambda=1$. From this it is easy to construct the isomorphism. 

\end{proof}

By the last lemma, in order to conclude that $D$ is surjective it is enough to prove that the index of $D$ is the index of $D_0$ plus the sum of dimensions of $ \mathcal{B}_{|v|}$.
This follows since the index of $D$ is equal to the index of $D_0$ if one does not consider the modular parameters. 


\end{document}